\begin{document}
%\markboth{Trinh Duc Tai}
%{Volume of sublevel sets versus area of level sets}
%
%%%%%%%%%%%%%%%%%%%%%% Publisher's Area please ignore %%%%%%%%%%%%%%
%\catchline{}{}{}{}{}
%%%%%%%%%%%%%%%%%%%%%%%%%%%%%%%%%%%%%%%%%%%%%%%%%%%%%%%%%%%%%%%%%%%%
\title{Volume of sublevel sets versus area of level sets via Gelfand-Leray form
}
%\subtitle{Do you have a subtitle?\\ If so, write it here}

\titlerunning{Volume of sublevel sets versus area of level sets}        % if too long for running head

\author{Trinh Duc Tai
}

%\authorrunning{Short form of author list} % if too long for running head

\institute{Trinh Duc Tai \at
              Department of Mathematics, University of Dalat,\\
              Tel.: +84 263 3822246\\
              Fax: +84 263 3823380\\
              \email{taitd@dlu.edu.vn}           %  \\
}

\date{Received: date / Accepted: date}

%\title{VOLUME OF SUBLEVEL SETS VERSUS AREA OF LEVEL SETS\\ VIA GELFAND-LERAY FORM}
%
%\author{TRINH DUC TAI}
%
%\address{Department of Mathematics, University of Dalat,\\
%  1 Phu Dong Thien Vuong, Dalat, Vietnam\\
%%/ tel. +84 263 3822246  / fax. +84 263 3823380 / 
%Email: taitd@dlu.edu.vn}

%\author{OTHER D. AUTHOR}

%\address{Full affiliations \\
%and mailing addresses}

\maketitle

\begin{abstract}
In this paper we give a relation between the volume of sublevel sets and the area of level sets using a Gelfand-Leray form. As a consequence, we give an estimation of the volume of sublevel sets. In particular we give a proof of the known fact that the derivative of the volume of a $n$-dimensional ball with respect to the radius equals the area of the sphere which bounds the $n$-dimensional ball.
\keywords{Differential form \and Gelfand-Leray form \and sublevel sets \and volume and area.}
% \PACS{PACS code1 \and PACS code2 \and more}
\subclass{58A10 \and 26B15}
\end{abstract}

%\keywords{Differential form; Gelfand-Leray form; sublevel sets; volume and area.}
%
%\ccode{Mathematics Subject Classification 2000: 58A10, 26B15}

\section{Introduction}
For a geometric figure in $3$-dimensional space, there are some intimate relationships between its volume and surface area. For example,  the variation of surface area of a figure under a dilation implies the variation of its volume, and conversely. It is then a natural question to ask how the  volume of an object depends on its surface area. 

Starting from a slightly different approach to this problem, our
goal is to explore the relation between
the volume $V(t)$ of sublevel sets and the area $A(t)$ of level sets of a certain function (detailed definitions of these quantities are given in Section 2). In fact, this investigation stems from  current studies of oscillatory integrals of the following form 
\begin{equation}
I(\lambda) = \int_{\mathbb{R}^n}e^{i\lambda f(x)}g(x)dx
\label{tpdd}
\end{equation}

Many of recent studies of estimates for oscillatory integrals lead to estimating volumes of sublevel sets of lower-bounded functions in $\mathbb{R}^n$. For $n=1$, one of results is well known as van der Corput's lemma (see \cite{stein}, page $332$, Prop. $2$). 

Our main goal is to establish an explicit relation between $V(t)$ and $A(t)$ for a given sufficiently smooth function $f$. More concretely, we show that (for $t\in \mathbb{R}$)
\begin{equation}
V'(t)=\frac{A(t)}{\|\nabla f(\xi)\|},
\label{main}
\end{equation}
for some $\xi \in f^{-1}(t)$.

 We can derive from this formulation many typical events, one of which is that the derivative of a $n-$dimensional ball with respect to the radius equals the area of its boundary. Thanks to the gradient inequality, we can obtain an estimate of the decay rate of $V(t)$ when $t\longrightarrow t_0$, where $t_0$ is a critical value of $f$.

This paper is organized as follows: In Section $2$ we recall some  primary facts on Gelfand-Leray form. The main theorem and its consequence are given in Section 3. A slight generalization for the case where $f$ is piece-wise smooth is mentioned in Section 4.

\section{Gelfand-Leray form }\label{}
The residue theory on complex manifolds has been intensively studied in a remarkable work of  Leray \cite{Leray}, in which the general residue forms have been defined. In the real case, others have introduced Gelfand-Leray form (cf. \cite{Arn}, Chapter $7$, and references therein) in an attempt to explore the asymptotic behaviors of oscillatory integrals whose phase functions have non-isolated singular points.

In this section we are going to recall some elementary facts about Gelfand-Leray form:
\begin{proposition}\label{md1}
Let $U\subset \mathbb{R}^n$ be a non-empty open subset of $\mathbb{R}^n$, and $\omega$ be a  differential $k$-form on $U$, where $k\ge 1$. If $\alpha$ is a non-zero differential $1$-form on $U$ such that
\begin{equation}\label{dk1}
\alpha \wedge \omega =0 
\end{equation}
then, there exists a unique differential $(k-1)$-form $\psi$ satisfying
\begin{equation}\label{}
\omega = \alpha \wedge \psi\,.
\end{equation}
\end{proposition}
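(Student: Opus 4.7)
The plan is to prove existence by a local contraction argument and to address uniqueness by a linear-algebra argument in a coframe adapted to $\alpha$. First I would observe that since $\alpha$ is nowhere zero, one can construct locally a smooth vector field $X$ on $U$ satisfying $\alpha(X)=1$: writing $\alpha=\sum a_i\,dx_i$, pick an index $j_0$ with $a_{j_0}\neq 0$ near a given point and take $X=a_{j_0}^{-1}\partial_{x_{j_0}}$ there, then patch such local choices with a partition of unity. The key ingredient is the Leibniz rule for the interior product applied to the wedge of a $1$-form with a $k$-form:
\begin{equation*}
\iota_X(\alpha\wedge\omega)\;=\;\alpha(X)\,\omega\;-\;\alpha\wedge\iota_X\omega .
\end{equation*}
Combining the hypothesis $\alpha\wedge\omega=0$ with $\alpha(X)=1$ gives immediately
\begin{equation*}
\omega\;=\;\alpha\wedge\iota_X\omega ,
\end{equation*}
so $\psi:=\iota_X\omega$ is the desired $(k-1)$-form.

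For the uniqueness clause, suppose $\omega=\alpha\wedge\psi_1=\alpha\wedge\psi_2$, so that $\alpha\wedge(\psi_1-\psi_2)=0$. Locally, complete $\alpha$ to a coframe $\alpha,\beta_2,\dots,\beta_n$ and expand $\psi_1-\psi_2=\alpha\wedge\sigma+\tau$, where $\tau$ is a sum of monomials in $\beta_2,\dots,\beta_n$ only. Then $\alpha\wedge\tau=0$, and since the monomials $\alpha\wedge\beta_{i_1}\wedge\cdots\wedge\beta_{i_{k-1}}$ with $2\le i_1<\cdots<i_{k-1}\le n$ are linearly independent, we conclude $\tau=0$, i.e.\ $\psi_1-\psi_2$ lies in the ideal generated by $\alpha$.

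The main obstacle is the uniqueness statement as literally written: for $k\ge 2$ there is genuine ambiguity in $\psi$ by any form of the shape $\alpha\wedge\sigma$, so strict uniqueness only makes sense after a normalization is fixed, e.g.\ $\iota_X\psi=0$ for the chosen $X$; this normalization is precisely what picks out the Gelfand-Leray representative and what makes the restriction of $\psi$ to a level set of a defining function intrinsic. For $k=1$ this ambiguity is empty and $\psi$ is genuinely unique as a function. I would therefore present the proposition with this understanding, since the existence half is a one-line computation via the Leibniz identity, whereas the uniqueness half should be read modulo the ideal generated by $\alpha$.
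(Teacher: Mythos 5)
Your proposal is correct, and the existence half takes a genuinely different route from the paper's. The paper only sketches its argument (deferring to Tu, Chapter 12): write $\omega$ in a basis of $\Omega^k(U)$ adapted so that $\alpha$ appears as a factor, and read off $\psi$ as the remaining factor. You instead choose a vector field $X$ with $\alpha(X)=1$ and apply the antiderivation identity $\iota_X(\alpha\wedge\omega)=\alpha(X)\,\omega-\alpha\wedge\iota_X\omega$, obtaining $\psi=\iota_X\omega$ in one line. What this buys is an explicit, coordinate-free formula; specialized to $\alpha=df$ with $X=\nabla f/\|\nabla f\|^2$ it reproduces verbatim the paper's global expression for $\omega/df$, and it makes transparent why the construction interacts well with the contraction $\iota_{\mathbf n}$ used later in Proposition 4. (A small simplification: a global $X$ is available directly as the metric dual $X=\alpha^{\sharp}/\|\alpha\|^{2}$, so the partition-of-unity patching is not needed, though it is also valid since the constraint $\alpha(X)=1$ is affine.) Your reading of ``non-zero'' as ``nowhere vanishing'' is the right one --- if $\alpha(p)=0$ the conclusion can fail at $p$ --- and your uniqueness discussion identifies a real imprecision in the statement: for $k\ge 2$, $\psi$ is determined only modulo the ideal generated by $\alpha$, since $\psi+\alpha\wedge\sigma$ also satisfies $\omega=\alpha\wedge(\psi+\alpha\wedge\sigma)$. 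Your coframe computation showing $\psi_1-\psi_2\in(\alpha)$ is exactly the correct invariant statement, and it is consistent with what the paper actually uses downstream, namely Proposition 2's claim that only the \emph{restriction} of $\psi$ to the fibers of $f$ is uniquely defined.
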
 
The proof of Proposition~\ref{md1} can be found in many text books on differentiable manifolds (e.g. \cite{Lor}, Chapter $12$). The essential feature of the proof is just choosing a basis of the $k$-forms $\Omega^k(U)$ on $U$ such that $\alpha$ is a factor of $\omega$ when there are written in that basis. Then, $\psi$ is given by the remaining factor. 

 From now, assume that $\omega = g(x)dx_1\wedge\cdots\wedge dx_n$ is a given smooth differential $n-$form on $U$. We consider a smooth function $f: U\subset\mathbb{R}^n\to \mathbb{R}$ such that $df(x_0)\ne 0$, for some $x_0\in U$. Then, the condition \eqref{dk1} is satisfied in some neighborhood of $x_0$ with the $1$-form $\alpha = df$. As a consequence of Proposition~\ref{md1}, we have
\begin{proposition}\label{sutontai}
We use the notations of Proposition~\ref{md1}. Consider a smooth form $\omega$ defined on $U$ by $gdx_1\wedge\ldots\wedge dx_n$, where $g$ is a smooth function defined on $U$. Let $f:U\subset\mathbb{R}^n\rightarrow \mathbb{R}$ be a smooth function defined on $U$. Assume that the differential $df(x_0)$ of $f$ at a point $x_0\in U$ does not vanish. Then,  there is a neighborhood $U_{x_0}$ and a smooth form $\psi$ on $U_{x_0}$, such that in $U_{x_0}$, we have: 
\begin{equation}\label{dn1}
\omega = df \wedge \psi\,.
\end{equation}

Furthermore, the restriction of $\psi$ on each fiber $f^{-1}(t)\cap U_{x_0}$ is uniquely defined  for $t$ sufficiently close to $t_0 = f(x_0)$.
\end{proposition}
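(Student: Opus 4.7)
The plan is to combine a local straightening of $f$ with the uniqueness clause of Proposition~\ref{md1}. Since $df(x_0)\neq 0$, the implicit function theorem provides a neighborhood $U_{x_0}$ of $x_0$ and local coordinates $(y_1,\ldots,y_n)$ on $U_{x_0}$ in which $f$ becomes the first coordinate, say $f = y_1 + c$ for some constant $c$. In these coordinates, $\omega = \tilde g(y)\,dy_1\wedge\cdots\wedge dy_n$ for some smooth function $\tilde g$, and setting
\[
\psi := \tilde g(y)\,dy_2\wedge\cdots\wedge dy_n,
\]
one checks immediately that $df\wedge\psi = dy_1\wedge\tilde g\,dy_2\wedge\cdots\wedge dy_n = \omega$. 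This yields the existence statement \eqref{dn1}. Alternatively, one may invoke Proposition~\ref{md1} directly: on $U_{x_0}$ the form $df$ is nowhere zero by continuity, and $df\wedge\omega$ is an $(n+1)$-form on an $n$-manifold, hence vanishes; so \eqref{dk1} holds with $\alpha = df$ and Proposition~\ref{md1} delivers $\psi$.

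For the fiberwise uniqueness, suppose $\psi_1$ and $\psi_2$ are two smooth $(n-1)$-forms on $U_{x_0}$ satisfying $df\wedge\psi_i = \omega$. Their difference $\eta := \psi_1-\psi_2$ satisfies $df\wedge\eta = 0$. Applying Proposition~\ref{md1} to $\eta$ (again using that $df$ is nonvanishing on $U_{x_0}$), one obtains $\eta = df\wedge\chi$ for some smooth $(n-2)$-form $\chi$. Since $f$ is constant on each fiber $f^{-1}(t)\cap U_{x_0}$, the pullback of $df$ along the inclusion of the fiber vanishes; consequently $\eta$ pulls back to zero on the fiber, giving $\psi_1|_{f^{-1}(t)} = \psi_2|_{f^{-1}(t)}$ as required. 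One can also verify this directly in the straightened coordinates: the condition $dy_1\wedge\eta = 0$ forces every monomial of $\eta$ to contain $dy_1$, and such a monomial restricts to zero on any level set $\{y_1 = \mathrm{const}\}$.

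The only delicate point, which I regard as the main ``obstacle'' (really a subtlety), is the observation that the $(n-1)$-form $\psi$ itself is \emph{not} unique as a form on $U_{x_0}$ — one can add to $\psi$ any form of the shape $df\wedge\chi$ without disturbing the identity $\omega = df\wedge\psi$. Uniqueness is recovered only after restriction to the fibers, and making this precise amounts to recognizing that ``restriction to the fiber'' means pullback along the inclusion, under which $df$ is annihilated. Once this is spelled out, both the existence and the fiberwise uniqueness are essentially immediate consequences of Proposition~\ref{md1} together with the local straightening of $f$.
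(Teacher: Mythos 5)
Your proof is correct and follows essentially the same route as the paper, which simply derives the statement from Proposition~\ref{md1} with $\alpha=df$ (noting that $df\wedge\omega$ vanishes for degree reasons); your coordinate straightening is exactly the mechanism the paper describes as underlying the proof of Proposition~\ref{md1}. Your explicit handling of the uniqueness clause --- observing that $\psi$ is unique only after pullback to the fibers, since any $df\wedge\chi$ can be added --- is in fact more careful than the paper, whose Proposition~\ref{md1} asserts uniqueness of $\psi$ as a form on $U$ without this qualification.
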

\begin{definition} The differential form $\psi$ in Proposition~\ref{sutontai} is called a \textit{Gelfand-Leray form}.
\end{definition}
To specify the dependence on $\omega$ and $f$, we always use the notation $\omega/df$ to denote the  
$(n-1)$-form $\psi$. Notice that the existence of $\omega/df$ is local, but its restriction on any regular fiber $f^{-1}(t)$ is unique. In special case where $f$ is non-singular on $U$ (i.e. $\nabla f(x)\ne 0, \forall x\in U$), a global expansion of $\omega/df$ can be written down in  the following form
\begin{equation}
\omega/df = g(x)\sum_{i=1}^n(-1)^{i-1}\frac{\partial_if}{\|\nabla f\|^2}dx_1\wedge\cdots\wedge \widehat{dx_i}\wedge \cdots\wedge dx_n\,,
\end{equation}
where $\widehat{}$ means to omit the corresponding factor.

The Gelfand-Leray form of the volume element and the area form on hypersurfaces have an intimate relationship. We shall clarify this relationship in a more general situation.
Consider now an $n$-dimensional  smooth manifold $M$, oriented by a volume form $\omega$ and assume $S\subset M$ is a smooth hypersurface oriented by a unit normal vector field $\mathbf{n}$. Suppose that $g$ is the Riemannian metric on $M$. We  denote by $\omega_g$ the volume element on $U$ induced by $g$. Then the hypersurface $S$ has a certain area element $\sigma_g$, which is compatible with the orientation of $S$.

Here, we recall the contraction of  forms on  finite-dimensional vector spaces. Let $V$ be a vector space of dimension $n$ and let $v$ be some fixed vector in  $V$. We define the operator $\iota_v : \Lambda^k(V^*) \rightarrow \Lambda^{k-1}(V^*)$ by
\begin{equation}\label{phepco}
\iota_{v}(\omega)(v_1,\ldots,v_{k-1})= \omega(v,v_1,\ldots,v_{k-1})
\end{equation}
where $\Lambda^k(V)$ denotes the vector space of $k-$linear forms on $V$.

The operator $\iota_v(\omega)$ is called the contraction of
forms on $V$ along vector $v$.

 With above notations, we have the following proposition whose proof can be found in \cite{Lor} (Chapter $13$).
\begin{proposition}
Let $\iota_{\mathbf{n}}$ be the contraction of differential forms along the unit normal vector field $\mathbf{n}$ of $S$. Then, we have
\begin{equation}\label{qqq} 
\sigma_g = \iota_{\mathbf{n}}\omega_g\big|_{S}
\end{equation}
\end{proposition}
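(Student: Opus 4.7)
The plan is to verify the identity pointwise at each $p \in S$ by showing that the two $(n-1)$-forms agree on a single basis of $T_pS$. Both $\sigma_g$ and $\iota_{\mathbf{n}}\omega_g|_S$ are alternating $(n-1)$-forms on the $(n-1)$-dimensional space $T_pS$, so they live in a one-dimensional space; agreement on one basis therefore gives the full equality at $p$. Since $\mathbf{n}$, $\omega_g$, and $\sigma_g$ are all smooth, the pointwise identity then upgrades to the global statement \eqref{qqq}.

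First I would pick an orthonormal basis $(e_1,\ldots,e_{n-1})$ of $T_pS$ with respect to the induced metric, oriented compatibly with the orientation on $S$ determined by $\mathbf{n}$---that is, the orientation for which $(\mathbf{n},e_1,\ldots,e_{n-1})$ is a positively oriented orthonormal frame of $T_pM$. By the defining properties of the Riemannian area and volume elements, $\sigma_g(e_1,\ldots,e_{n-1})=1$ and $\omega_g(\mathbf{n},e_1,\ldots,e_{n-1})=1$. Applying the contraction definition \eqref{phepco} with $v=\mathbf{n}$ gives
\begin{equation*}
(\iota_{\mathbf{n}}\omega_g)(e_1,\ldots,e_{n-1}) \;=\; \omega_g(\mathbf{n},e_1,\ldots,e_{n-1}) \;=\; 1 \;=\; \sigma_g(e_1,\ldots,e_{n-1}),
\end{equation*}
and by one-dimensionality of $\Lambda^{n-1}(T_pS^*)$ the two forms coincide on every $(n-1)$-tuple in $T_pS$. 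Conceptually the content is transparent: for any $v_1,\ldots,v_{n-1}\in T_pS$, the number $\omega_g(\mathbf{n},v_1,\ldots,v_{n-1})$ is the signed $g$-volume of a parallelepiped in $T_pM$ of height $\|\mathbf{n}\|=1$ erected perpendicularly on the base spanned by $v_1,\ldots,v_{n-1}$, and this volume equals the $g$-area of the base.

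The only genuine obstacle is orientation bookkeeping. The equality is an identity of oriented forms, so the induced orientation on $S$ must be specified at the outset as the one making $(\mathbf{n},e_1,\ldots,e_{n-1})$---rather than $(e_1,\ldots,e_{n-1},\mathbf{n})$---a positive frame of $T_pM$, otherwise a sign $(-1)^{n-1}$ enters. Once this convention is fixed, the pointwise argument above applies uniformly on $S$ and yields the proposition.
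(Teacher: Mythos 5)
Your proof is correct. The paper itself does not prove this proposition---it simply cites Tu's \emph{An Introduction to Manifolds}, Chapter 13---and your argument (evaluate both $(n-1)$-forms on a single positively oriented orthonormal basis of $T_pS$, then invoke one-dimensionality of $\Lambda^{n-1}(T_pS^*)$) is precisely the standard proof found there. Your orientation convention, that $(\mathbf{n},e_1,\ldots,e_{n-1})$ be a positive frame of $T_pM$, is also the one the paper tacitly adopts when it later uses \eqref{qqq} in the proof of Proposition~\ref{promain} with the frame $(E_1:=\mathbf{n},E_2,\ldots,E_n)$.
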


Consider some open $U$ in $\mathbb{R}^n$ as  an $n$-dimensional differentiable manifold, orientated in the standard way by its tangent spaces $T_xU$. Let $f: U\to \mathbb{R}$ be a smooth function. 
It is well known that
if $t_0$ is not a critical value of $f$ then the hypersurface $S := \{x\in U: f(x)=t_0\}$ is an orientable, $(n-1)$-dimensional smooth submanifold  of $U$.
\begin{definition}
Let $S$ be a smooth hypersurface $f^{-1}(t_0)$, a vector field $v$ on $S$ points outward along $S$ if, for any $x\in S$, we have $\left\langle\nabla f(x), v(x)\right\rangle > 0$.
\end{definition} 

\begin{proposition}\label{promain}
Assume that $t_0\in \mathbb{R}$ is a regular value of $f$, hence $S := f^{-1}(t_0)$ is a smooth hypersurface of $U$. Denote by $\mathbf{n}$ the unit normal vector field pointing outward  along  $S$. 
    
Let $\omega = dx_1\wedge\cdots\wedge dx_n$ be Euclidean volume element in $\mathbb{R}^n$ and $\sigma$ be the corresponding area element on $S$. Then
\begin{equation}\label{tt-dt}
df(\mathbf{n})\frac{\omega}{df}\Big|_S=\sigma
\end{equation}
where  $\dfrac{\omega}{df}$ is the Gelfand-Leray form.
\end{proposition}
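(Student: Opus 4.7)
The plan is to deduce (\ref{tt-dt}) from the previously established contraction formula (\ref{qqq}). Viewing $U \subset \mathbb{R}^n$ as a Riemannian manifold with the standard Euclidean metric, the Euclidean volume form $\omega = dx_1 \wedge \cdots \wedge dx_n$ \emph{is} the metric volume element $\omega_g$, and the area element $\sigma$ on $S$ is the corresponding metric area element $\sigma_g$. Hence the proposition preceding (\ref{qqq}) specializes to $\sigma = \iota_{\mathbf{n}} \omega\big|_S$. The remaining task is therefore to identify $\iota_{\mathbf{n}} \omega\big|_S$ with $df(\mathbf{n})\,(\omega/df)\big|_S$.

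To do this, I would start from the local defining identity $\omega = df \wedge (\omega/df)$ of the Gelfand-Leray form, valid in a neighborhood of each point of $S$ because $t_0$ is a regular value of $f$. Applying $\iota_{\mathbf{n}}$ and using the graded Leibniz rule for interior multiplication, together with the identity $\iota_{\mathbf{n}} df = df(\mathbf{n})$, gives
\begin{equation*}
\iota_{\mathbf{n}} \omega \;=\; df(\mathbf{n})\,\frac{\omega}{df} \;-\; df \wedge \iota_{\mathbf{n}}\!\left(\frac{\omega}{df}\right).
\end{equation*}

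Next, I would pull both sides back to $S$ along the inclusion $S \hookrightarrow U$. Since $f$ is constant on $S$, the pullback of $df$ to $S$ vanishes, so the second term drops out. What remains is precisely
\begin{equation*}
\iota_{\mathbf{n}} \omega\big|_S \;=\; df(\mathbf{n})\,\frac{\omega}{df}\bigg|_S,
\end{equation*}
which, combined with $\sigma = \iota_{\mathbf{n}} \omega\big|_S$, yields the claimed identity (\ref{tt-dt}).

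The main point requiring care is that $\mathbf{n}$ is naturally defined only on $S$, while the Gelfand-Leray form $\omega/df$ lives on the ambient open set $U$; this causes no trouble since contraction is a pointwise operation, so only the values of $\mathbf{n}$ at points of $S$ are needed once we restrict to $S$. One should also verify the correct sign in the Leibniz rule, but since $\deg(df) = 1$ the surviving sign is a minus, which is harmless as the corresponding term vanishes on $S$ anyway. Beyond these bookkeeping details there is no real obstacle: the proof is a direct algebraic manipulation combining the defining property of $\omega/df$ with the vanishing of $df$ upon restriction to a level set.
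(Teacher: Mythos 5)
Your proof is correct and is essentially the paper's own argument in coordinate-free form: the paper evaluates $\sigma=\iota_{\mathbf n}\omega$ and $df\wedge(\omega/df)$ on a frame $(\mathbf n,E_2,\dots,E_n)$ adapted to $S$ and kills the terms with $df(E_i)=0$, which is exactly your Leibniz-rule computation $\iota_{\mathbf n}(df\wedge\psi)=df(\mathbf n)\psi-df\wedge\iota_{\mathbf n}\psi$ followed by the vanishing of the pullback of $df$ to $S$. No substantive difference.
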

\begin{proof} Assume $(E_2,\ldots,E_n)$ is an arbitrary oriented basis  on $S$ such that $(E_1:= \mathbf{n},E_2,\ldots,E_n)$ gives the canonical orientation of the tangent fibration $TU$.

By using \eqref{qqq} and \eqref{phepco}, we have
$$\sigma(E_2,\ldots,E_n)=(i_{\mathbf{n}}\omega)(E_2,\ldots,E_n) = \omega(\mathbf{n},E_2,\ldots,E_n)$$
From the above definitions, we can write
$$\begin{array}{rcl}\medskip
\omega(\mathbf{n},E_2,\ldots,E_n) & = &(df\wedge \dfrac{\omega}{df})(E_1,E_2,\ldots,E_n)  \\ \medskip
 & = & \sum_{i=1}^n(-1)^{i-1}df(E_i)\dfrac{\omega}{df}(E_1,\ldots,\widehat{E_i},\ldots,E_n) \\
 & = & df(\mathbf{n})\dfrac{\omega}{df}(E_2,\ldots,E_n)\qquad (\text{since }
df(E_i)=0,\forall i\ge 2)
\end{array}$$
This yields the equality.\qed
\end{proof}

\section{Volume of sublevel sets and area of level sets}
We begin this section by fixing some notations. Let $f:\mathbb{R}^n\rightarrow \mathbb{R}$ be a given function. For each $t\in \mathbb{R}$, we denote by $E_t$ the sublevel set of $f$ at the level $t$,
\begin{equation}
E_t =  \{x\in \mathbb{R}^n : f(x) \le t\}
\end{equation}
and by $V(t)$ the $n-$dimensional Lebesgue measure of $E_t$.  Notice that $V(t)$ may not  exist except for cases where $f$ is  bounded below, continuous and proper in the sense that the preimage of a compact set under $f$ is again compact.

In relating to $V(t)$, we are interested in the $(n-1)-$dimensional Lebesgue measure of the level set $\{f=t\}$, which will be denoted by $A(t)$. For the sake of brevity, we call them,  correspondingly, the volume and the area.

% Under some suitable hypothesis on $f$,  
For a given smooth $n-$form $\omega$ on $\mathbb{R}^n$ such that $\text{supp} \omega \cap \{f=t\}$ is compact for almost $t\in \mathbb{R}$, we are able to consider the real function
\begin{equation}\label{hamGJ}
J(t) := \int_{f=t}\dfrac{\omega}{df}\quad,
\end{equation}
which is called the Gelfand-Leray function generated by $\omega$.
This function is well-defined at regular values of $f$. If $f$ is sufficiently smooth, $J(t)$ is differentiable almost everywhere, except for the set of critical values of $f$. Since the set of critical values of a smooth function has measure zero, we can ignore these values in the context of Lesbegue measure. Thus we have
\begin{proposition}\label{fubini} Assume that the support of $n-$form $\omega$ is disjoint with the critical set  of a smooth function $f$. Then we have
\begin{equation}
\int_{\mathbb{R}^n}\omega = \int_{-\infty}^{+\infty}\left(\int_{f=t}\frac{\omega}{df}\right)dt
\end{equation}
\end{proposition}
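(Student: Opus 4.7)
The plan is to reduce the claim to the classical Fubini theorem by trivialising the fibration $f$ in local coordinates, and then to patch the local identities together by a partition of unity.

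First, I would exploit the hypothesis that $\mathrm{supp}\,\omega$ is disjoint from the critical set of $f$: on a neighbourhood of $\mathrm{supp}\,\omega$ the differential $df$ is nowhere zero, so by the submersion theorem every point admits a chart $(y_1,\ldots,y_n)$ in which $f=y_1$. I would cover $\mathrm{supp}\,\omega$ by a locally finite family of such charts $\{U_\alpha\}$, adjoin one open set on which $\omega$ vanishes to cover the rest of $\mathbb{R}^n$, and choose a subordinate smooth partition of unity $\{\rho_\alpha\}$. Because both sides of the asserted identity are $\mathbb{R}$-linear in $\omega$ and the Gelfand-Leray construction is purely local, it suffices to establish the identity for each $\omega_\alpha:=\rho_\alpha\omega$, whose support lies in a single chart $U_\alpha$.

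On such a chart, I would write $\omega_\alpha = g(y)\,dy_1\wedge\cdots\wedge dy_n$ with $g$ smooth and compactly supported. Since $df=dy_1$, the uniqueness part of Proposition~\ref{md1} forces $\omega_\alpha/df = g(y)\,dy_2\wedge\cdots\wedge dy_n$, and the orientation inherited on the slice $\{y_1=t\}$ by the relation $\omega_\alpha = df\wedge(\omega_\alpha/df)$ is exactly the one making $dy_2\wedge\cdots\wedge dy_n$ positive. The classical Fubini theorem then gives immediately
\begin{align*}
\int_{\mathbb{R}^n}\omega_\alpha &= \int g(y)\, dy_1\cdots dy_n \\
&= \int_{-\infty}^{+\infty}\left(\int g(t,y_2,\ldots,y_n)\, dy_2\cdots dy_n\right) dt \\
&= \int_{-\infty}^{+\infty}\left(\int_{f=t}\frac{\omega_\alpha}{df}\right) dt.
\end{align*}
Summing over $\alpha$ and interchanging the locally finite sum with the outer integral by monotone or dominated convergence---justified by the compact-slice hypothesis on $\mathrm{supp}\,\omega\cap\{f=t\}$---recovers the global identity for $\omega$.

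The subtle point I anticipate is orientation bookkeeping: Proposition~\ref{md1} pins down $\omega/df$ only modulo forms divisible by $df$, so I must verify that the fibrewise restriction carries precisely the orientation used by Fubini to produce the stated slice integrals. Once this is checked in the model chart $f=y_1$ (where everything is tautological), the assembly through the partition of unity is routine, modulo the standard verification that the compact-slice condition is preserved under multiplication by each $\rho_\alpha$ and permits the interchange of sum and integral.
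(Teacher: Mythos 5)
Your proof is correct and follows essentially the same route as the paper: both reduce the identity to the classical Fubini theorem by passing to coordinates in which $f$ is the first coordinate, where the Gelfand--Leray form becomes the evident slice form $g\,dy_2\wedge\cdots\wedge dy_n$. The paper compresses this into a single global change of variables $(x_1,\ldots,x_n)\mapsto(f,x_2,\ldots,x_n)$ under the blanket assumption $\partial f/\partial x_1\neq 0$, whereas your partition-of-unity localization (together with the orientation check) is the honest justification of that step, so your write-up is, if anything, more complete than the published one.
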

\begin{proof}
 From the definition of Gelfand-Leray form, we have
$$\int_{\mathbb{R}^n}\omega=\int_{\mathbb{R}^n}df\wedge\dfrac{\omega}{df}$$
For the integral on the right hand side, by changing of variable
$$(x_1,\ldots,x_n)\mapsto (t:=f,x_2,\ldots,x_n)$$
(assuming $\partial f/\partial x_1\ne 0$) and by using the Fubini formula, we obtain
$$\int_{\mathbb{R}^n}df\wedge\dfrac{\omega}{df}= \int_{-\infty}^{+\infty}\left(\int_{f=t}\frac{\omega}{df}\right)dt$$
Proposition \ref{fubini} is complete.\qed
\end{proof}
Recall that $V(t)$ is the volume of $E_t = \{f\le t\}$. So
$$V(t)= \int_{E_t}\omega$$
where $\omega  = dx_1\wedge\cdots\wedge dx_n$ is the standard volume element in the Euclidean space $\mathbb{R}^n$.
\begin{proposition} Let $f:\mathbb{R}^n\rightarrow \mathbb{R}$ be a differentiable function having an isolated minimum at $0$, with $f(0)=0$. Moreover, assume that $f$ is proper and $df\ne 0$ on each  fiber $f^{-1}(t)$, for all $t>0$. 
Then
\begin{equation}\label{daohamthetich}
\frac{d}{dt}V(t) = J(t),\quad\forall t>0.
\end{equation}
\end{proposition}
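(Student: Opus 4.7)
The strategy is to express $V(t)$ as the primitive $\int_0^t J(s)\,ds$ via the Fubini-type identity of Proposition~\ref{fubini}, and then conclude $V'(t) = J(t)$ by the fundamental theorem of calculus. Since the volume form $\omega = dx_1\wedge\cdots\wedge dx_n$ is supported on all of $\mathbb{R}^n$ while $0$ is a critical point of $f$, Proposition~\ref{fubini} cannot be applied directly; the essential step is a truncation away from the origin.

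Fix $0 < a < t$. By properness of $f$, the slab $\{a \leq f \leq t\}$ is compact, and by hypothesis it lies entirely in the regular locus of $f$. Choose a smooth cutoff $\chi$ equal to $1$ on an open neighborhood of this slab and vanishing on a neighborhood of $0$; then $\chi\omega$ has support disjoint from the critical set of $f$. Applying Proposition~\ref{fubini} to $\chi\omega$ restricted to $\{a\le f\le t\}$ yields
\begin{equation*}
\int_{\{a\le f\le t\}}\omega \;=\; \int_a^t\!\left(\int_{f=s}\frac{\omega}{df}\right)ds \;=\; \int_a^t J(s)\,ds.
\end{equation*}

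Now I would let $a \to 0^+$. Since $0$ is an isolated minimum, $\{f\le 0\} = \{0\}$ has Lebesgue measure zero, so monotone convergence gives $\int_{\{a\le f\le t\}}\omega \to V(t)$. The finiteness of $V(t)$ (guaranteed by properness) then forces the improper integral $\int_0^t J(s)\,ds$ to converge, and $V(t) = \int_0^t J(s)\,ds$. Finally, because $df\ne 0$ on the regular fiber $f^{-1}(t)$, the Gelfand-Leray form $\omega/df$ depends smoothly on the level and $J$ is continuous at $t$, so the fundamental theorem of calculus yields $V'(t) = J(t)$.

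The main obstacle is precisely the incompatibility between the hypothesis of Proposition~\ref{fubini}, which forbids critical points in the support of $\omega$, and the volume form here, whose support is all of $\mathbb{R}^n$. The truncation-and-limit procedure circumvents this, exploiting that the sole critical point $\{0\}$ has measure zero; establishing the continuity of $J$ at regular values (so that FTC may be invoked in its elementary form) is the only nontrivial point that must be verified along the way.
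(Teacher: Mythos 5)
Your proposal follows the same route as the paper: express $V(t)=\int_0^t J(s)\,ds$ via Proposition~\ref{fubini} and differentiate. The paper simply invokes Proposition~\ref{fubini} directly and writes this identity down in one line; your truncation away from the critical point at the origin and the limit $a\to 0^+$ supply a justification that the paper omits (since the support of $\omega$ is all of $\mathbb{R}^n$ and so is not disjoint from the critical set), so your argument is the same in substance but more carefully executed.
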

\begin{proof} It is obvious that $V(t)$ is a non-decreasing function. 
By making use of Proposition \ref{fubini}, we can write
\begin{eqnarray*}
V(t) &=& \int_{0}^t\left(\int_{f=s}\frac{dx}{df}\right)ds=\int_{0}^tJ(s)ds\\
\end{eqnarray*}
This yields the equality \eqref{daohamthetich}.\qed
\end{proof}

One difficult problem is to find conditions  under which  $V(t)$ is finite for any~$t$, as well as to estimate the asymptotic behavior of $V(t)$ when $t$ tends to some critical value $t_0$. On the other hand, the normal estimation of $V(t)$ can imply the asymptotic behavior of oscillatory integrals \eqref{tpdd} and vice versa (see \cite{Arn}).
These two parallel problems are known as a multidimensional version of van der Corput lemma, which shows to be a powerful tool in many contexts  (see \cite{pss,ccw,cw}).
%%%%
\begin{theorem}\label{dlchinh}
Assume as above that $f:\mathbb{R}^n\rightarrow \mathbb{R}$ is differentiable, having an isolated minimum at the origin $0$ such that $f(0)=0$. Moreover assume that $f$ is proper and $df$ is $\neq 0$ on each fiber $f^{-1}(t)$   for all $t>0$ and the hypersurface $f^{-1}(t)$ are connected  for all $t\in (0,\epsilon)$.
Then, for a fixed $t\in (0,\epsilon)$, there exists a $\xi\in f^{-1}(t)$ such that
\begin{equation}\label{dlmain}
V'(t) = \frac{A(t)}{\|\nabla f(\xi)\|}
\end{equation}
\end{theorem}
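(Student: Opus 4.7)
The plan is to combine the three main tools assembled earlier in the paper: (i) the identification $V'(t)=J(t)=\int_{f=t}\omega/df$ from equation \eqref{daohamthetich}; (ii) the geometric description of the Gelfand-Leray form on a regular level set given by Proposition~\ref{promain}, which rewrites $\omega/df\big|_{S}$ in terms of the area element $\sigma$; and (iii) an integral mean-value argument that exploits the hypothesis that $f^{-1}(t)$ is connected.

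First I would observe that since $df\neq 0$ on $f^{-1}(t)$, the outward unit normal is simply $\mathbf{n}=\nabla f/\|\nabla f\|$, so $df(\mathbf{n})=\langle \nabla f,\mathbf{n}\rangle=\|\nabla f\|$. Substituting into \eqref{tt-dt} gives the pointwise relation
\begin{equation*}
\frac{\omega}{df}\bigg|_{f^{-1}(t)}=\frac{\sigma}{\|\nabla f\|}.
\end{equation*}
Combining this with $V'(t)=J(t)$ immediately yields the integral formula
\begin{equation*}
V'(t)=\int_{f^{-1}(t)}\frac{\sigma}{\|\nabla f(x)\|}.
\end{equation*}

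Next I would apply the mean value theorem for integrals. Because $f$ is proper and $t>0$, the level set $f^{-1}(t)$ is compact; by hypothesis it is also connected, and the function $x\mapsto 1/\|\nabla f(x)\|$ is continuous on it (since $\nabla f$ does not vanish there). A continuous real-valued function on a connected compact set attains every value between its infimum and supremum, so its average with respect to the finite positive measure $\sigma$ is itself attained at some point $\xi\in f^{-1}(t)$. This gives
\begin{equation*}
\int_{f^{-1}(t)}\frac{\sigma}{\|\nabla f(x)\|}=\frac{1}{\|\nabla f(\xi)\|}\int_{f^{-1}(t)}\sigma=\frac{A(t)}{\|\nabla f(\xi)\|},
\end{equation*}
which is exactly \eqref{dlmain}.

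The only genuinely nontrivial step is the mean-value step, and its legitimacy rests entirely on the topological/analytic hypotheses: properness guarantees that $f^{-1}(t)$ is compact (hence $A(t)<\infty$ and $\|\nabla f\|$ attains its extrema on the fiber), regularity guarantees that $1/\|\nabla f\|$ is continuous, and connectedness supplies the intermediate value property needed to realize the average as an actual value. If any of these failed the conclusion would have to be weakened to an inequality between $V'(t)\,\inf_{f^{-1}(t)}\|\nabla f\|$ and $A(t)$, or an analogous estimate with the supremum; the point of the hypotheses is precisely to upgrade this sandwich to an equality at some $\xi$.
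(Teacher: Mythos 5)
Your proposal is correct and follows essentially the same route as the paper's own proof: identify $V'(t)$ with the Gelfand--Leray integral, convert $\omega/df$ into $\sigma/\|\nabla f\|$ via Proposition~\ref{promain} and the computation $df(\mathbf{n})=\|\nabla f\|$, then invoke compactness and connectedness of the fiber to realize the weighted average of $1/\|\nabla f\|$ at some point $\xi$. Your write-up is in fact slightly more explicit than the paper's about why the mean-value step works (the intermediate value property on a connected compact fiber), but there is no substantive difference in approach.
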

\begin{proof}
 By virtue of Proposition~\ref{promain}, we have
\begin{eqnarray*}
V'(t) &=& \int_{f=t}\frac{\omega}{df}\\
&=&\int_{f=t}\dfrac{\sigma}{df(\mathbf{n})}\\
\end{eqnarray*}
where $\sigma$ still denotes the area element of $\{f=t\}$ induced by the Euclidean volume element in $\mathbb{R}^n$.

By restricting on the hypersurface $f=t$, we have
$$df(\mathbf{n})=\langle\nabla f,\frac{\nabla f}{\|\nabla f\|}\rangle=\|\nabla f\|$$
The connectedness and compactness of $f^{-1}(t)$ enable us to infer that there exists a $\xi=\xi_t\in f^{-1}(t)$ such that 
\begin{align*}
\int_{f=t}\dfrac{\sigma}{df(\mathbf{n})}&=\int_{f=t}\dfrac{\sigma}{\|\nabla f(x)\|}\\
&=\frac{1}{\|\nabla f(\xi)\|}\int_{f=t}\sigma=\frac{A(t)}{\|\nabla f(\xi)\|}
\end{align*}
This yields the formula \eqref{dlmain}.\qed
\end{proof}
\begin{remark}
For $f\in C^1(\mathbb{R}^n)$ such that its fibers $f^{-1}(t)$ are compact connected non-singular hypersurfaces, the quantity $\|\nabla f(\xi)\|^{-1}$ in \eqref{dlmain} is nothing but the integral mean value of function $\frac{1}{\|\nabla f(x)\|}$  on the fiber $f^{-1}(t)$. As a corollary of Theorem~\ref{dlchinh} one obtains the known result:
\end{remark}
\begin{corollary}
The derivative of the volume of a ball with respect to the radius equals
the area of its boundary.
\end{corollary}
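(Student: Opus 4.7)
The plan is to deduce the corollary by applying Theorem~\ref{dlchinh} to the radial function whose sublevel sets are exactly the Euclidean balls. Concretely, I would set $f(x) = \|x\|$ on $\mathbb{R}^n$. Then for each $t > 0$, the sublevel set $E_t = \{x : \|x\| \le t\}$ is the closed ball of radius $t$, so $V(t)$ in the sense of Section~3 is exactly the volume of this ball as a function of the radius. Similarly $f^{-1}(t) = \{x : \|x\| = t\}$ is the sphere of radius $t$, and $A(t)$ is its $(n-1)$-dimensional area.

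Next I would verify the hypotheses of Theorem~\ref{dlchinh}. The function $f(x) = \|x\|$ attains an isolated (in fact unique global) minimum at the origin with $f(0) = 0$, it is proper since the preimage of $[0,R]$ is the closed ball $\overline{B(0,R)}$, and on every fiber $f^{-1}(t)$ with $t > 0$ the function is smooth with $\nabla f(x) = x/\|x\|$, so that $df \neq 0$ on these fibers. The fibers $f^{-1}(t)$ for $t > 0$ are spheres, hence connected (and compact) as required. The only subtlety is the non-differentiability at the origin; since the proof of Theorem~\ref{dlchinh} only uses differentiability on regular fibers and the statement concerns $t > 0$, this causes no problem. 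If one prefers a fully smooth $f$, one can instead apply the theorem to $\tilde f(x) = \|x\|^2$ and perform the straightforward change of variable $t = \sqrt{s}$ at the end.

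Then the computation is immediate: for any $x \in f^{-1}(t)$ with $t > 0$,
\begin{equation*}
\|\nabla f(x)\| = \left\| \frac{x}{\|x\|} \right\| = 1.
\end{equation*}
Applying Theorem~\ref{dlchinh} with this $f$, there exists $\xi \in f^{-1}(t)$ with
\begin{equation*}
V'(t) = \frac{A(t)}{\|\nabla f(\xi)\|} = \frac{A(t)}{1} = A(t),
\end{equation*}
which is exactly the classical identity between the derivative of the ball's volume and the area of its bounding sphere. The main (indeed the only) obstacle is the cosmetic one of explaining why the origin being a non-smooth critical point of $\|x\|$ does not invalidate the application of Theorem~\ref{dlchinh}; since this is trivially handled either by working on $\mathbb{R}^n \setminus \{0\}$ or by substituting $f = \|x\|^2$, the corollary follows at once.
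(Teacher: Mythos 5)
Your proposal is correct and follows essentially the same route as the paper: apply Theorem~\ref{dlchinh} to $f(x)=\sqrt{x_1^2+\cdots+x_n^2}$, whose sublevel sets are the closed balls, and observe that $\|\nabla f(\xi)\|=1$ on every fiber $f^{-1}(t)$ with $t>0$. Your extra remarks on the non-differentiability at the origin (and the alternative $\tilde f(x)=\|x\|^2$) are a welcome bit of care that the paper's own one-line proof omits, but they do not change the argument.
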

\begin{proof}
 Consider a closed ball of radius $t > 0$ centered at the origin in $\mathbb{R}^n$. It is merely the sublevel set of function $f(x) = \sqrt{x_1^2+\cdots+x^2_n}$ at the level $t$.
 
The assertion now comes from \eqref{dlmain} with notice that $\|\nabla f(\xi)\|
= 1$.\qed
\end{proof}
Hereafter, we also raise an estimation of $V(t)$. A further discussion about this is mentioned in Section~\ref{kl}.
\begin{corollary}
Assume in addition to the hypotheses of Theorem~\ref{dlchinh} that $f$ is an analytic function in a neighborhood of $0\in \mathbb{R}^n$ such that $f(0)=0$, then we have the inequality
\begin{equation}\label{bdt}
V'(t)\le \frac{A(t)}{t^{\nu}},
\end{equation}
with some $0<\nu<1$.
\end{corollary}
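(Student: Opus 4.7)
The plan is to invoke the \L{}ojasiewicz gradient inequality and feed it directly into the formula of Theorem~\ref{dlchinh}. Since $f$ is real analytic near $0\in\mathbb{R}^n$ with $f(0)=0$ and $0$ is a critical point (it is an isolated minimum), the \L{}ojasiewicz gradient inequality guarantees the existence of a neighborhood $U$ of $0$ and constants $C>0$ and $\theta\in(0,1)$ such that
\begin{equation*}
\|\nabla f(x)\|\;\ge\; C\,|f(x)|^{\theta},\qquad\forall\,x\in U.
\end{equation*}
Shrinking $\epsilon$ if necessary, we may assume that the fibre $f^{-1}(t)$ lies entirely inside $U$ for every $t\in(0,\epsilon)$, since $f$ is proper and $f(0)=0$.

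Next, I apply Theorem~\ref{dlchinh}: for each $t\in(0,\epsilon)$ we obtain a point $\xi=\xi_t\in f^{-1}(t)$ with $V'(t)=A(t)/\|\nabla f(\xi)\|$. Because $\xi\in U$ and $f(\xi)=t>0$, the \L{}ojasiewicz inequality applied at $\xi$ gives $\|\nabla f(\xi)\|\ge C\,t^{\theta}$, hence
\begin{equation*}
V'(t)\;=\;\frac{A(t)}{\|\nabla f(\xi)\|}\;\le\;\frac{A(t)}{C\,t^{\theta}}.
\end{equation*}

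Finally, to match the statement exactly (i.e.\ no multiplicative constant and a single exponent $\nu\in(0,1)$), I absorb $C$ into the power of $t$. Pick any $\nu$ with $\theta<\nu<1$, and shrink $\epsilon$ further so that $\epsilon^{\nu-\theta}\le C$. Then for every $t\in(0,\epsilon)$ we have $t^{\nu-\theta}\le C$, equivalently $1/(C\,t^{\theta})\le 1/t^{\nu}$, which yields the desired estimate \eqref{bdt}. The only genuine difficulty in this argument is the invocation of the \L{}ojasiewicz gradient inequality, which requires real analyticity (it is false for general smooth $f$); once that is in hand, everything else is bookkeeping with the constants and the freedom to shrink~$\epsilon$.
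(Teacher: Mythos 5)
Your proposal is correct and follows exactly the paper's route: the paper's entire proof is the single sentence ``apply the gradient inequality to \eqref{dlmain}.'' You have simply supplied the details the paper omits --- in particular the bookkeeping step of absorbing the \L{}ojasiewicz constant $C$ into the exponent by choosing $\nu\in(\theta,1)$ and shrinking $\epsilon$ --- which is a faithful and complete elaboration of the intended argument.
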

\begin{proof} It is sufficient to apply the gradient inequality (see \cite{loj}) to \eqref{dlmain}. \qed
\end{proof}
\section{Extension: Case of piece-wise smooth functions} We next extend the above result to a larger class of piece-wise smooth functions on $\mathbb{R}^n$. In other words, the fibers of $f$ do not need to be smooth, provided that they admit a decomposition into a finite number of smooth connected components (Fig.~\ref{hinh1}). This seems be appropriate for our considerations because the volume is not modified up to null-measure sets.
\begin{figure}[h]
\centering
\subfigure[\label{hinh1}]
  {\includegraphics[bb = 0 0 331 333, scale=0.3]{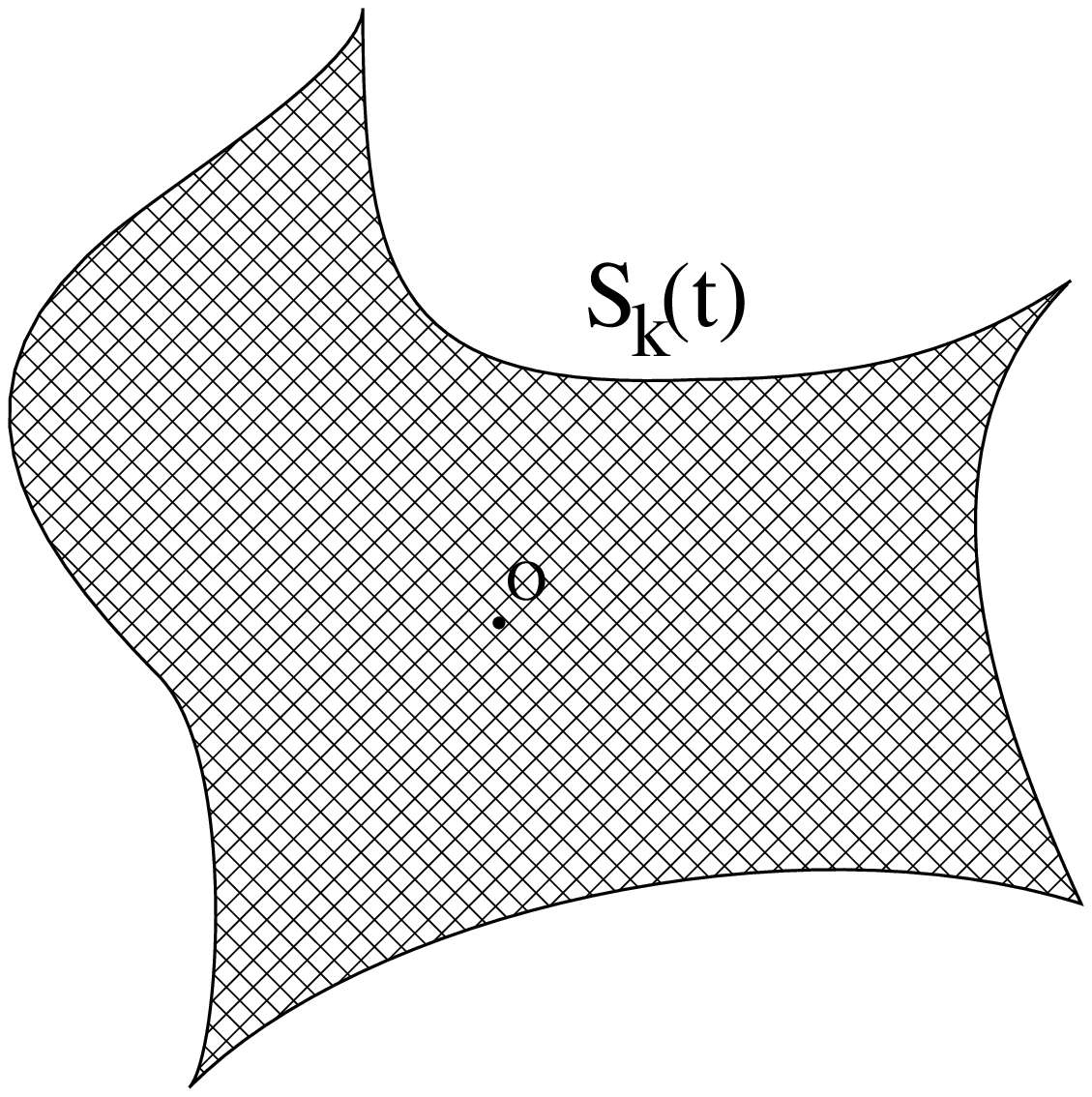}}\quad
\subfigure[\label{hinh2}]
  {\includegraphics[width=.45\linewidth]{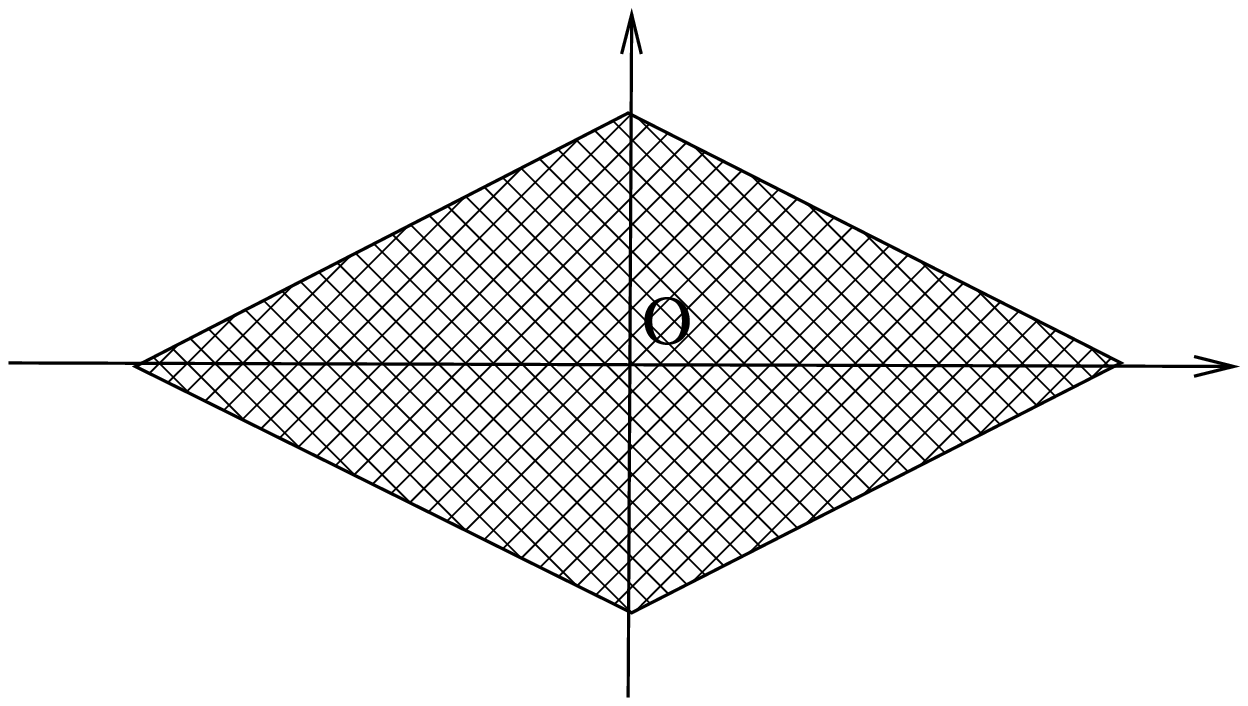}}\hfill
\caption{Sublevel sets}
\end{figure}
\begin{theorem}\label{morong}
Let $f:\mathbb{R}^n\rightarrow \mathbb{R}$ be a non-negative, piece-wise smooth function satisfying the following conditions:
\begin{enumerate}
\item[i)] For each $t\in (0,\epsilon)$, the sublevel set $\{f\le t\}$ is a compact neighborhood of the origin.
\item[ii)] There is a positive integer $m$ such that each level set $\{f=t\}$ is a discrete union of $m$ connected smooth hypersurfaces $S_k(t)$ $(k=0,\ldots,m)$, on which $\|\nabla f\|\ne 0$.
\end{enumerate}
Then there exists $\xi_k=\xi_k(t)\in S_k(t)$ satisfying the following
\begin{equation}\label{moronga}
V'(t) = \sum_{k=1}^m\frac{A_k(t)}{\|\nabla f(\xi_k)\|}
\end{equation}
where $A_k(t)$ denotes the area of $S_k(t)$ for $k=1,2,...,m$.
\end{theorem}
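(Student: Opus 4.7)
The plan is to reduce Theorem~\ref{morong} to a componentwise application of Theorem~\ref{dlchinh}. The key observation is that, although $f$ is only piece-wise smooth, the non-smooth locus has Lebesgue measure zero in $\mathbb{R}^n$ and, by hypothesis (ii), meets each regular level $\{f=t\}$ in a set of vanishing $(n-1)$-dimensional measure; hence both the volume integral over $E_t$ and the area integral over $\{f=t\}$ see only the smooth pieces.

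Starting from
$$V(t) = \int_{\{f\le t\}} \omega,\qquad \omega := dx_1\wedge\cdots\wedge dx_n,$$
I would first apply the Fubini-type identity of Proposition~\ref{fubini} on the open subset of $\mathbb{R}^n$ on which $f$ is smooth with non-vanishing differential. Since the complement is negligible, this yields
$$V(t) = \int_0^t J(s)\,ds,\qquad J(s) := \sum_{k=1}^m \int_{S_k(s)} \frac{\omega}{df}.$$
By hypothesis (ii) the function $J$ is continuous in $s\in(0,\epsilon)$, so the fundamental theorem of calculus gives
$$V'(t) = \sum_{k=1}^m \int_{S_k(t)} \frac{\omega}{df}.$$

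Next, on each smooth component $S_k(t)$ I would invoke Proposition~\ref{promain}: letting $\mathbf{n}_k$ denote the outward unit normal and $\sigma_k$ the induced area element, one has $df(\mathbf{n}_k) = \|\nabla f\|$ along $S_k(t)$, so that
$$\int_{S_k(t)} \frac{\omega}{df} = \int_{S_k(t)} \frac{\sigma_k}{\|\nabla f\|}.$$
Since each $S_k(t)$ is connected and compact, and $1/\|\nabla f\|$ is continuous and positive there by hypothesis (ii), the mean value theorem for integrals produces a point $\xi_k=\xi_k(t)\in S_k(t)$ with
$$\int_{S_k(t)} \frac{\sigma_k}{\|\nabla f\|} = \frac{1}{\|\nabla f(\xi_k)\|}\int_{S_k(t)}\sigma_k = \frac{A_k(t)}{\|\nabla f(\xi_k)\|}.$$
Summation over $k$ then gives \eqref{moronga}.

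The main obstacle I anticipate is justifying rigorously that the non-smooth locus can be discarded in both integrals. This requires the piece-wise smooth structure to be sufficiently regular (for instance, stratified so that the singular set is a finite union of smooth submanifolds of codimension at least one in each fiber), so that the Gelfand-Leray Fubini identity of Proposition~\ref{fubini} extends from the smooth complement to the whole of $E_t$ with no boundary contribution. Once this negligibility is in place, the rest of the argument is a componentwise repetition of Theorem~\ref{dlchinh}, the single mean-value selection being replaced by one $\xi_k$ per component.
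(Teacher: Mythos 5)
Your argument is correct and follows essentially the same route as the paper: decompose the level set into its connected smooth components, apply Proposition~\ref{promain} on each piece, and use the integral mean value theorem on each compact connected component to produce the points $\xi_k$. Your extra care about discarding the non-smooth locus (which the paper dismisses with the remark that the volume is unchanged up to null-measure sets) is a welcome refinement but does not change the substance of the proof.
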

\begin{proof} By hypothesis and Proposition~\ref{promain}, we can write
\begin{eqnarray*}
V'(t) &=& \int_{f=t}\frac{\omega}{df}=\int_{f=t}\dfrac{\sigma}{df(\mathbf{n})}\\
&=&\sum_{k=1}^m\int_{S_k(t)}\dfrac{\sigma}{df(\mathbf{n})}.\\
\end{eqnarray*}
The remain of the proof is similar to the proof of  Theorem~\ref{dlchinh}. \qed
\end{proof}

This extension enables us to treat a significantly larger class of piece-wise smooth functions, especially smooth functions of variables $|x_1|,\ldots,|x_n|$ on $\mathbb{R}^n$. 

Hereafter we are interested in piecewise-linear  functions
$$f(x_1,\ldots,x_n)=a_1|x_1|+a_2|x_2|+\cdots+a_n|x_n|$$
where $a_i> 0$ for all $i=1,2,\ldots,n$ and $\sqrt{a_1^2+\cdots+a_n^2}=1$.

It is obvious that the sublevel set of $f$ at some level $t>0$ is a convex (compact) polyhedron whose faces have the same distance $\frac{t}{\|a\|}$ to the origin (Fig.~\ref{hinh2}). In other hand, $\|\nabla f(x)\|=\|a\|$ for all $x$ belongs to the interior of octans of $\mathbb{R}^n$. Thus, \eqref{moronga} now reads 
$$V'(t)=\sum_{k=1}^{2^n}\frac{A_k(t)}{\|a\|}=\frac{A(t)}{\|a\|}$$

By the same argument as above we can re-establish some basic events concerning the relationship between volume and area of a polyhedron. Let us consider a polyhedron $\mathcal{P}$ in $\mathbb{R}^n$ having the origin as its interior point. We suppose that $\mathcal{P}$ is \textit{regular}, in the sense that the origin is equidistant from  its faces. 

 Assume without loss of generality that this distance is unit. Denote by $t\mathcal{P}$ the dilation  of $\mathcal{P}$ of coefficient $t>0$. Then $t\mathcal{P}$ can be considered as sublevel sets of a certain piecewise-linear function $F(x)$, whose restriction to the $i^{th}$-face of $t\mathcal{P}$ is a linear functional of the form $F_i(x)=<a_i,x>$, where $\|a_i\|=1$.
 
 This leads the fact that the derivative of the volume $V(t)$ of $t\mathcal{P}$ with respect to $t$ equals the area of its boundary.
\section{Closing remarks}\label{kl}
Theorem~\ref{dlchinh} gives an intrinsic relation between geometrical measures of a certain (smooth) function. It may be useful in estimating  these measures via the gradient of $f$.

For tame objects (e.g. see \cite{yomdin}), the area $A(t)$ must be small and tend to $0$, whenever $t$ tends to $0$. Hence, we can upper bound it by some constant  for all $t\in (0,\epsilon)$. Moreover, in case $f$ has an absolute minimum value $f(0)=0$,  we can  deduce from \eqref{bdt} that
\begin{equation}\label{danhgia}
V(t)= \int_0^t V'(s)ds\le Ct^{1-\nu},
\end{equation}
where $\nu\in (0,1)$.

This can give a sharp estimation of the volume of sublevel sets as long as we accurately compute the exponent $\nu$ in the gradient inequality.
The inequality in \eqref{danhgia} is also concerned with estimating the decay rate of oscillatory and exponential integrals (see \cite{pss,ccw,vas}).
%\vspace*{5mm}
%
%\begin{acknowledgements}
%If you'd like to thank anyone, place your comments here
%and remove the percent signs.
%\end{acknowledgements}

\section*{Acknowledgments}  We would like to thank the reviewers for their helpful suggestions and comments on the paper. This research is funded
 by Vietnam National Foundation for Science and Technology Development (NAFOSTED) under grant number $101.04-2017.324$.

\end{document}